\documentclass[12pt]{amsart}

\usepackage[T1]{fontenc}
\usepackage[utf8]{inputenc}
\usepackage{amsmath,amssymb,amsthm,mathtools}
\usepackage{hyperref}

\newtheorem{theorem}{Theorem}[section]

\newtheorem{lemma}[theorem]{Lemma}
\newtheorem{corollary}[theorem]{Corollary}
\theoremstyle{definition}
\newtheorem{definition}[theorem]{Definition}
\theoremstyle{remark}
\newtheorem{remark}[theorem]{Remark}

\DeclareMathOperator{\Gal}{Gal}
\DeclareMathOperator{\Br}{Br}
\DeclareMathOperator{\inv}{inv}

\newcommand{\Fp}{\mathbb F_p}

\newcommand{\Hp}{H_{p^3}}
\newcommand{\E}{\mathcal E_p}

\title[Common-slot chains and Heisenberg central products]{From Common-Slot Chains to Heisenberg Central Products over Global Fields}
\author[M. Palaisti]{M. Palaisti}
\date{}

\begin{document}

\begin{abstract}
Let $F$ be a global field and let $p$ be an odd prime with $p\neq\operatorname{char}F$. Assuming first that $\mu_p\subset F$, we record in a uniform global-field form the length-four common-slot chain for equal degree-$p$ symbol classes and emphasize the signed normalization adapted to explicit norm constructions. Thus, from
\[(a,b)_p=(c,d)_p\in\Br(F)[p]\]
one obtains $x,y\in F^\times$ such that
\[(a,b)_p=(x^{-1},b)_p=(x,y)_p=(c^{-1},y)_p=(c,d)_p.\]
For number fields, the underlying chain is the length-four chain lemma of Gille--Szamuely, based on Tate's simultaneous local--global theorem. The point developed here is that its signed form yields four compatible norm equations that can be used constructively. For the extraspecial central product $H_{p^3}*H_{p^3}$ over a $C_p^4$-Kummer extension, the central-embedding obstruction is $(a,b)_p-(c,d)_p$, while the four norm equations supplied by the chain assemble, under a natural independence hypothesis on the auxiliary Kummer classes, into an explicit factorized radical realization of the central product. Finally, when the ground field does not contain $\mu_p$, we show by a restriction--corestriction argument that the central-embedding obstruction is detected after passage to the cyclotomic extension $F(\mu_p)$. Over that field the problem is Kummer, and the obstruction is again the difference of the two symbol classes.
\end{abstract}

\maketitle

\noindent \textbf{Keywords:} symbol algebras; chain lemmas; central embedding problems; extraspecial $p$-groups; Kummer theory; global fields.

\noindent \textbf{MSC 2020:} 12F12, 12G05, 16K50

\section{Introduction}

Common-slot and chain phenomena for symbol algebras have a long history. For
quaternion algebras, the common-slot principle produces particularly short
chains, whereas in higher degree the structure is more delicate. Rost proved
a chain lemma for Kummer elements in degree $3$, with a corresponding
common-slot consequence for cyclic algebras of degree $3$; see \cite{Rost}.
More recent work has continued to investigate the scope and limitations of
common-slot properties for symbol algebras; see, for example,
\cite{Sivatski}.

For arithmetic fields, Tate's work on $K_2$ and Galois cohomology provides a
powerful simultaneous local--global principle. In the number-field case,
Gille and Szamuely use this input to prove a length-four chain lemma
\cite[Lemma~7.6.11]{GilleSzamuely}. In symbol-algebra notation, if $F$
contains $\mu_p$ and
\[(a,b)_p=(c,d)_p\in\Br(F)[p],\]
then there exist $x,y\in F^\times$ such that
\begin{equation}\label{eq:signed-chain-intro}
	(a,b)_p=(x^{-1},b)_p=(x,y)_p=(c^{-1},y)_p=(c,d)_p.
\end{equation}
Indeed, the chain displayed in \cite[Lemma~7.6.11]{GilleSzamuely} is
\[(b,x)_p=(x,y)_p=(y,c)_p=(c,d)_p,\]
which is equivalent to \eqref{eq:signed-chain-intro} by skew-symmetry. The
same local--global argument applies to the global fields considered here:
the proof uses the nondegenerate local Hilbert-symbol pairing, weak
approximation, and Tate's global-field theorem. We include the argument in
Section~2 in order to keep the paper self-contained at the point where the
chain is used and, more importantly, to retain precise control of the signed
normalization needed for the subsequent construction.

The central observation of this paper is that, in the embedding problem
considered below, the common-slot chain carries substantially more
constructive information than the vanishing of the Brauer obstruction alone.
In the signed form \eqref{eq:signed-chain-intro}, the equality of the two
symbol classes yields four compatible norm relations:
\begin{align*}
	ax &\in N_{F(\sqrt[p]{b})/F}\bigl(F(\sqrt[p]{b})^\times\bigr),\\
	x^{-1} &\in N_{F(\sqrt[p]{by})/F}\bigl(F(\sqrt[p]{by})^\times\bigr),\\
	y^{-1} &\in N_{F(\sqrt[p]{xc})/F}\bigl(F(\sqrt[p]{xc})^\times\bigr),\\
	yd &\in N_{F(\sqrt[p]{c})/F}\bigl(F(\sqrt[p]{c})^\times\bigr).
\end{align*}
These four equations provide the arithmetic data from which we construct an
explicit radical realization of a Heisenberg central product.

More precisely, the group-theoretic object of interest is
\[\E=\Hp*\Hp,\]
the central product obtained by identifying the centers of two Heisenberg
groups of order $p^3$ with opposite orientation. It is an extraspecial group
of order $p^5$ and exponent $p$, fitting into a central extension
\[1\longrightarrow C_p\longrightarrow\E\longrightarrow C_p^4
\longrightarrow1.\]
We consider the corresponding embedding problem over a Kummer extension
\[K=F\bigl(\sqrt[p]{a},\sqrt[p]{b},\sqrt[p]{c},\sqrt[p]{d}\bigr).\]
For the presentation of $\E$ used here, the standard central-embedding
obstruction specializes to
\[(a,b)_p-(c,d)_p.\]
Hence the condition
\[(a,b)_p=(c,d)_p\]
is exactly the solvability condition for the embedding problem.

The obstruction criterion sits naturally within the classical theory of
central $p$-extensions. Massy developed explicit obstruction and construction
methods for central $p$-extensions of elementary abelian $p$-extensions
\cite{Massy}; Swallow subsequently gave general explicit constructions for
central embedding problems and for central $p$-extensions of
$(p,p,\ldots,p)$-type Galois groups
\cite{SwallowConstructible,SwallowCentral}. Ledet's monograph
\cite{Ledet} gives a systematic treatment of Brauer-type embedding problems
and the obstruction formalism used below.

The contribution of the present paper is a chain-adapted explicit
construction in which the arithmetic and group-theoretic data are linked
directly. Under a natural independence
hypothesis on the auxiliary Kummer classes $x$ and $y$, the four norm
equations supplied by \eqref{eq:signed-chain-intro} can be assembled into a
single factorized radical extension. The lifted Kummer generators in this
extension satisfy precisely the commutator relations defining $\Hp*\Hp$.
Thus the main structural passage of the paper is
\begin{eqnarray*}
	\text{equality of symbols}
	&\Longrightarrow&
	\text{common-slot chain}
	\Longrightarrow\\
	\text{four compatible norm equations}
	&\Longrightarrow&
	\text{factorized radical realization of }\Hp*\Hp.
\end{eqnarray*}
In this form, the obstruction theory determines when the desired extension
exists, while the common-slot chain supplies, in the generic auxiliary case,
a particularly transparent way to construct it.

This construction places the general obstruction theory in a form adapted
to the present Heisenberg central product. Its advantage is that the Brauer
obstruction, the common-slot chain, the associated norm equations, and the
resulting commutator relations are exhibited in a single compatible
calculation. The same viewpoint also extends naturally
from number fields to the global fields considered here. When the ground
field does not contain $\mu_p$, we pass to the cyclotomic extension, formulate
the obstruction in Kummer form there, and then descend solvability back to
the original field.

The paper is organized as follows. Section~2 records the local and global
common-slot chain and derives the signed norm relations used later.
Section~3 specializes the standard central-embedding obstruction to
$\Hp*\Hp$ and gives the chain-adapted radical construction in the generic
auxiliary case. Section~4 explains the corresponding cyclotomic descent when
the ground field does not contain $\mu_p$.

\section{Common-slot chains over global fields}

Throughout this section, $F$ is a global field, $p$ is an odd prime with
$p\neq\operatorname{char}F$, and $\mu_p\subset F$. We write $(u,v)_p$ for
the degree-$p$ symbol algebra and use additive notation for $\Br(F)[p]$ when
signs are involved. Thus
\[
 (u,v)_p=-(v,u)_p.
\]

\subsection{Tate's simultaneous local--global lemma}

The global input in the chain argument is a simultaneous local--global
theorem of Tate. We state only the form needed here. In the number-field
setting, this is the same input used in the proof of the length-four chain
lemma of Gille--Szamuely.

\begin{theorem}[Tate]\label{thm:Tate}
Let $\alpha_1,\ldots,\alpha_r\in\Br(F)[p]$ and let
$a_1,\ldots,a_r\in F^\times$. Suppose that for every place $v$ of $F$
there is $x_v\in F_v^\times$ such that
\[
 (a_i,x_v)_{p,v}=(\alpha_i)_v,
 \qquad i=1,\ldots,r.
\]
Then there exists $x\in F^\times$ satisfying
\[
 (a_i,x)_p=\alpha_i,
 \qquad i=1,\ldots,r.
\]
\end{theorem}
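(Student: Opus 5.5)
Since the statement is global, the plan is to reduce it to a finite-dimensional duality problem and solve that with Poitou--Tate / global reciprocity. The map $x\mapsto \bigl((a_i,x)_p\bigr)_i$ is a homomorphism
\[
\phi\colon F^\times/F^{\times p}\to \Br(F)[p]^r .
\]
By Kummer theory it is cup product with the class of $a_i$ in $H^1(F,\mu_p)$, landing in $H^2(F,\mu_p^{\otimes 2})\cong H^2(F,\mu_p)$ (recall $\mu_p\subset F$). Similarly, for each place $v$ we have the local map $\phi_v$. The hypothesis says that $(\alpha_v)_v$ lies in $\prod_v \operatorname{im}\phi_v$. I will arrange the argument in three steps.

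\textbf{Step 1: reduce to finitely many places.} Let $S$ be a finite set of places containing the places above $p$, the archimedean places, the places where some $a_i$ is not a unit, and the places where some $\alpha_i$ is ramified. For $v\notin S$, each $\alpha_{i,v}=0$ and each $a_i$ is a local unit. I will seek $x$ that is locally close to $x_v$ for $v\in S$ (by weak approximation, since symbols are locally constant), and otherwise restricted to be a unit outside $S\cup\{w\}$ for one auxiliary place $w$. Outside $S\cup\{w\}$ all symbols $(a_i,x)_v$ then vanish, because both entries are units at a tame place. So $(a_i,x)_p-\alpha_i$ is a class supported on $\{w\}$. By the reciprocity sequence $0\to\Br(F)\to\bigoplus_v\Br(F_v)\xrightarrow{\sum\inv}\mathbb Q/\mathbb Z$, a class supported at one place is zero. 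It therefore suffices to make the local invariants correct at every place of $S$. The place $w$ cannot be dropped naively, because the $\inv$ sum is forced.

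\textbf{Step 2: the key step.} I must find $x\in F^\times$ with $x\equiv x_v$ modulo $F_v^{\times p}$ for all $v\in S$, and with $x$ a unit outside $S\cup\{w\}$, where $w$ is a place chosen so that $(a_i,x)_w$ is automatically correct. Here I would use Chebotarev. Take $x_0$ from weak approximation; it has extra prime divisors $\mathfrak q_1,\dots,\mathfrak q_m$ outside $S$. At such a $\mathfrak q_j$, the symbol $(a_i,x_0)_{\mathfrak q_j}$ equals the tame symbol $a_i^{v(x_0)}$ modulo $p$-th powers, which may be nonzero. To fix this, I would multiply $x_0$ by an element $z\in F^\times$, chosen $p$-adically close to $1$ at $S$, whose divisor cancels the $\mathfrak q_j$ modulo $p$ except at a single new prime $w$. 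Such a $z$ exists by a ray class group argument: choose $w$ in the class of $\prod \mathfrak q_j^{-v(x_0)}$ in the ray class group mod a high power of $S$, modulo $p$-th powers. Chebotarev provides $w$, and additionally I can require $w$ to split completely in $F(\sqrt[p]{a_1},\dots,\sqrt[p]{a_r})$. Then every $a_i$ is a $p$-th power at $w$, so $(a_i,x)_w=0$, which is also the value of $\alpha_{i,w}$.

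\textbf{Step 3: conclude.} The class $(a_i,x)_p-\alpha_i$ now vanishes at every place, so it is zero by Hasse--Brauer--Noether. The main obstacle is Step 2: the requirements of splitting completely in the Kummer field and lying in a prescribed ray class must be compatible. This requires the two fields to be suitably disjoint over $F$, and the disjointness can fail. I would first attempt to handle this by enlarging $S$ so that the ray class field contains the needed Kummer extension, thereby making the conditions compatible. Failing that, I would fall back on the cohomological formulation: by Poitou--Tate, the element $(\alpha_v)$ comes from a global $x$ precisely when it is orthogonal to a Sha-type group, and this orthogonality is what the local solvability hypothesis at all places supplies. This is Tate's original argument, and it is what I would cite if the Chebotarev route stalls.
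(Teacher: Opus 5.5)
The paper gives no proof of this statement; it only cites Tate's Lemma~5.2. Your outline is the standard argument: weak approximation on a finite set $S$, one auxiliary prime $w$ from Chebotarev in a ray class group, and reciprocity to control $w$. The ingredients you list already give a complete proof, but you have misplaced the difficulty.

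The ``main obstacle'' in Step~3 is self-inflicted. Your own Step~1 already settles it. Suppose $x\in x_vF_v^{\times p}$ for $v\in S$ and $v(x)\equiv 0\pmod p$ for all $v\notin S\cup\{w\}$. Then $(a_i,x)_p-\alpha_i$ is supported at $w$ alone, so its invariant at $w$ is $0$ because $\sum_v\inv_v=0$, and the class vanishes by Albert--Brauer--Hasse--Noether. So in Step~2 you only need a prime $w\notin S$ in the prescribed class of $\mathrm{Cl}_{\mathfrak m}/\mathrm{Cl}_{\mathfrak m}^{p}$. Chebotarev supplies one, also for function fields, since this quotient is finite.

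The extra requirement that $w$ split completely in $L=F(\sqrt[p]{a_1},\dots,\sqrt[p]{a_r})$ should simply be deleted. Then neither the disjointness discussion nor the Poitou--Tate fallback is needed; the latter, as stated, is too vague to count as an argument anyway.

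Three small points of hygiene:
\begin{itemize}
\item Step~2 produces $v(x)\equiv 0\pmod p$, not ``$x$ a unit'', outside $S\cup\{w\}$. That suffices, since the tame symbol of a unit $a_i$ at $v\nmid p$ then vanishes.
\item Take $\mathfrak m$, supported on the finite places of $S$, large enough that every $u\in F_v^\times$ with $u\equiv 1\bmod \mathfrak m$ lies in $F_v^{\times p}$ for $v\in S$.
\item There are no real places, because $\mu_p\subset F$ and $p$ is odd.
\end{itemize}

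If you insist on keeping the splitting condition, your ``enlarge $S$'' remedy does work, but containment $L\subset H_{\mathfrak m}$ is not what makes the two conditions compatible. It only turns the splitting condition into a condition on the ray class of $w$. Compatibility then requires that $\mathfrak a$, the prime-to-$S$ part of $\operatorname{div}(x_0)$, have trivial Artin symbol in $\Gal(L/F)$. This holds by reciprocity: $\operatorname{Art}_{L/F}(\mathfrak a)=\prod_{v\notin S}\operatorname{Art}_v(x_0)=\prod_{v\in S}\operatorname{Art}_v(x_0)^{-1}$. Under the Kummer pairing, the right-hand side acts on $\sqrt[p]{a_i}$ through $\sum_{v\in S}\inv_v(\alpha_i)=\sum_v\inv_v(\alpha_i)=0$.

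So once $L\subset H_{\mathfrak m}$, every prime in the class of $\mathfrak a^{\pm1}$ automatically splits in $L$. This is just your Step~1 seen from the other side, which is why the splitting condition adds nothing.
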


This is \cite[Lemma~5.2]{Tate}, translated into the present
symbol-algebra notation. We shall use it only for $r=2$.

\subsection{The local chain}

Let $k$ be a nonarchimedean local field containing $\mu_p$. The local
invariant identifies $\Br(k)[p]$ with
$\frac1p\mathbb Z/\mathbb Z\cong\Fp$, and the Hilbert symbol induces a
nondegenerate alternating pairing
\[
 B_k:k^\times/k^{\times p}\times k^\times/k^{\times p}
 \longrightarrow\Fp,
 \qquad
 B_k(\bar u,\bar v)=\inv_k((u,v)_p).
\]
At archimedean places the $p$-primary Brauer group is trivial for odd $p$,
so no separate argument is needed.

\begin{lemma}\label{lem:local-chain}
Let $k$ be a local field with $p\neq\operatorname{char}k$ and
$\mu_p\subset k$. Suppose
\[
 (a,b)_p=(c,d)_p=\alpha\in\Br(k)[p].
\]
Then there exist $X,Y\in k^\times$ such that
\[
 (a,b)_p=(X,b)_p=(X,Y)_p=(c,Y)_p=(c,d)_p.
\]
\end{lemma}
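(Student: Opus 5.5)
The plan is to reduce everything to linear algebra in the $\Fp$-vector space $V=k^\times/k^{\times p}$ equipped with the pairing $B_k$. Since $\inv_k$ is injective on $\Br(k)[p]$, an equality of degree-$p$ symbol classes over $k$ is equivalent to equality of the corresponding values of $B_k$. Write $t=\inv_k(\alpha)=B_k(\bar a,\bar b)=B_k(\bar c,\bar d)\in\Fp$. It then suffices to find classes $\bar X,\bar Y\in V$ with
\[
 B_k(\bar X,\bar b)=t,\qquad B_k(\bar X,\bar Y)=t,\qquad B_k(\bar c,\bar Y)=t .
\]
The archimedean case is disposed of first. If $k$ is archimedean and contains $\mu_p$ with $p$ odd, then $k=\mathbb C$, so $\alpha=0$ and $X=Y=1$ works. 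The same choice $X=Y=1$ handles the nonarchimedean case $t=0$, since all the symbols involved are then trivial.

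Now suppose $t\neq0$. Then $\bar b\neq0$ and $\bar c\neq0$, because $B_k$ is bilinear and $B_k(\bar a,\bar b)=B_k(\bar c,\bar d)=t\neq0$. Since $B_k$ is nondegenerate and $V$ is finite-dimensional, $v\mapsto B_k(\cdot,v)$ is an isomorphism $V\to V^*$. Hence linearly independent vectors give linearly independent functionals, and any system of linear equations in $\bar X$ prescribed against independent vectors is solvable. I would choose $\bar X$ first and $\bar Y$ second. The constraint on $\bar Y$ is that the two equations $B_k(\bar X,\bar Y)=t$ and $B_k(\bar c,\bar Y)=t$ be simultaneously solvable. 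This is guaranteed as soon as $\bar X$ and $\bar c$ are linearly independent, because then the functionals $B_k(\bar X,\cdot)$ and $B_k(\bar c,\cdot)$ are independent.

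So the main point is to pick $\bar X$ with $B_k(\bar X,\bar b)=t$ and $\bar X\notin\Fp\bar c$. Two cases arise.
\begin{itemize}
\item If $\bar c\in\Fp\bar b$, then any $\bar X$ with $B_k(\bar X,\bar b)=t$ works. Such an $\bar X$ exists since $\bar b\neq0$. It is automatically outside $\Fp\bar b=\Fp\bar c$, because $B_k$ is alternating, so $B_k(\lambda\bar b,\bar b)=0\neq t$.
\item If $\bar b,\bar c$ are independent, I would solve the two equations $B_k(\bar X,\bar b)=t$ and $B_k(\bar X,\bar c)=-t$. Since $B_k(\bar c,\bar c)=0\neq -t$, the resulting $\bar X$ is not a multiple of $\bar c$.
\end{itemize}
In either case $\bar Y$ then exists by the previous paragraph. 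Lifting $\bar X,\bar Y$ to elements $X,Y\in k^\times$ and translating the three equalities of $B_k$-values back into symbols through $\inv_k$ gives the asserted chain.

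The only delicate step is securing the independence of $\bar X$ and $\bar c$. This is exactly where both the nondegeneracy and the alternating property of the Hilbert pairing are used, and it is why the case split above is needed.
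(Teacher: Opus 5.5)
Your proof is correct and follows the paper's argument: you reduce to the nondegenerate alternating pairing $B_k$ on $V=k^\times/k^{\times p}$, choose $\bar X$ on the affine hyperplane $\{B_k(\cdot,\bar b)=t\}$ but off the line $\Fp\bar c$, and then solve for $\bar Y$ using the independence of the functionals $B_k(\bar X,\cdot)$ and $B_k(\bar c,\cdot)$. The only difference is that the paper secures $\bar X\notin\Fp\bar c$ by comparing the cardinalities of the hyperplane and the line, whereas you construct such an $\bar X$ directly from the alternating property with a case split; this works equally well.
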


\begin{proof}
If $\alpha=0$, take $X=Y=1$. Assume $\alpha\neq0$ and write
$\lambda=\inv_k(\alpha)\in\Fp^\times$. Put
\[
 V=k^\times/k^{\times p},
\]
and let $B=B_k$. Since
\[
 B(\bar a,\bar b)=\lambda\neq0,
\]
the classes $\bar a$ and $\bar b$ are linearly independent; in particular
$\dim_{\Fp}V\ge2$.

Consider the affine hyperplane
\[
 H=\{u\in V:B(u,\bar b)=\lambda\}.
\]
We claim that one may choose $\bar X\in H$ outside the line
$\Fp\bar c$. The hyperplane $H$ does not contain $0$, whereas
$\Fp\bar c$ is a linear subspace through $0$. If $\dim_{\Fp}V=2$, both
$H$ and a one-dimensional subspace have $p$ elements, so containment would
force equality, which is impossible because only the latter contains $0$.
If $\dim_{\Fp}V>2$, then $|H|=p^{\dim V-1}>p=|\Fp\bar c|$. Hence such an
$\bar X$ exists.

Because $\bar X$ and $\bar c$ are linearly independent, nondegeneracy of
$B$ implies that the two linear functionals
\[
 B(\bar X,-),\qquad B(\bar c,-)
\]
are independent. Hence there exists $\bar Y\in V$ such that
\[
 B(\bar X,\bar Y)=\lambda,
 \qquad
 B(\bar c,\bar Y)=\lambda.
\]
Choosing representatives $X,Y\in k^\times$ gives the desired chain.
\end{proof}

\subsection{Globalization}

For number fields, the following statement is the length-four chain lemma of
Gille--Szamuely \cite[Lemma~7.6.11]{GilleSzamuely}. We give the argument in
the present notation because the signed normalization and the intermediate
local data are used directly in Section~3. The same proof applies to global
function fields under our standing hypothesis $p\neq\operatorname{char}F$.

\begin{theorem}\label{thm:global-chain}
Let $F$ be a global field, let $p$ be an odd prime with
$p\neq\operatorname{char}F$, and assume $\mu_p\subset F$. If
\[
 (a,b)_p=(c,d)_p\in\Br(F)[p],
\]
then there exist $X,Y\in F^\times$ such that
\begin{equation}\label{eq:ordinary-chain}
 (a,b)_p=(X,b)_p=(X,Y)_p=(c,Y)_p=(c,d)_p.
\end{equation}
Equivalently, there exist $x,y\in F^\times$ such that
\begin{equation}\label{eq:signed-chain}
 (a,b)_p=(x^{-1},b)_p=(x,y)_p=(c^{-1},y)_p=(c,d)_p.
\end{equation}
\end{theorem}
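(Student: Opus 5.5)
The plan is to globalize Lemma~\ref{lem:local-chain} in two steps, each an application of Theorem~\ref{thm:Tate} with $r=2$. Put $\alpha=(a,b)_p=(c,d)_p$, and let $S$ be the finite set of places where $\alpha_v\neq0$. Because $p$ is odd, archimedean places lie outside $S$. The natural idea is to apply the local lemma at each $v$ and glue the local data. However, the intermediate classes $(X,b)_p$ and $(c,Y)_p$ must hold globally, not merely locally. So I will construct $X$ first and $Y$ afterwards, each time specifying its symbols against two fixed global elements.

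\textbf{Step 1: constructing $X$.} I want a global $X$ with $(b,X)_p=-\alpha$, which says $(X,b)_p=\alpha$. I also want $\bar X$ and $\bar c$ independent in $F_v^\times/F_v^{\times p}$ at every $v\in S$. To invoke Tate's theorem, I will prescribe a second symbol, say $(c,X)_p=\beta$, for a suitably chosen $\beta\in\Br(F)[p]$. For $v\in S$, the proof of Lemma~\ref{lem:local-chain} yields $X_v$ with $B(\bar X_v,\bar b)=\lambda_v$ and $\bar X_v\notin\Fp\bar c$.

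The main obstacle is that independence from $\bar c$ is not a symbol condition. I would try to encode it as one. If $\bar c\neq0$ locally, nondegeneracy lets me pick $X_v$ so that $(c,X_v)_v$ is a prescribed nonzero value; I will also require this to be incompatible with $\bar X_v\in\Fp\bar c$. Indeed, if $\bar X_v=t\bar c$, then $(c,X_v)=t(c,c)=0$ since the pairing is alternating. So requiring $(c,X_v)_v\neq0$ forces independence. The lemma's argument shows that $H\cap\{u:B(\bar c,u)\neq0\}$ is nonempty when $\bar b$ and $\bar c$ are independent locally; this uses that $H$ is an affine hyperplane not contained in a hyperplane parallel to $\ker B(\bar c,-)$. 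If instead $\bar b\in\Fp\bar c$, then $\bar c\neq0$ because $\lambda_v\neq0$. In that case $B(\bar c,\cdot)$ is a nonzero multiple of $B(\bar b,\cdot)$, so it is automatically nonzero on $H$. This gives local values $\beta_v\neq0$ at $v\in S$.

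Outside $S$, I take $\beta_v=0$ and $X_v=1$. To make $\beta$ a global class, I need $\sum_v\inv_v\beta_v=0$. I will arrange this by choosing the $\beta_v$ with zero sum, adding, if necessary, one auxiliary place $w\notin S$ where $c$ is not a $p$-th power; such $w$ exists by Chebotarev when $\bar c\neq0$ globally. There I choose $X_w$ with $(b,X_w)_w=0$ and $(c,X_w)_w$ equal to the needed correction. This needs $B(\bar b,\cdot)$ and $B(\bar c,\cdot)$ independent at $w$, which I would also secure by the Chebotarev choice. The degenerate case $\bar c=0$ forces $\alpha=0$, and then $X=Y=1$ works. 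By the existence part of the Albert--Brauer--Hasse--Noether exact sequence, $\beta$ exists. Tate's theorem then gives a global $X$ with $(b,X)_p=-\alpha$ and $(c,X)_p=\beta$.

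\textbf{Step 2: constructing $Y$.} I now need a global $Y$ with $(X,Y)_p=\alpha$ and $(c,Y)_p=\alpha$. I apply Theorem~\ref{thm:Tate} to the pair $X,c$ with both target classes equal to $\alpha$. At $v\in S$, local solvability holds because $\bar X_v$ and $\bar c_v$ are independent, so the two functionals are independent, exactly as in Lemma~\ref{lem:local-chain}. At $v\notin S$, $Y_v=1$ works. This yields \eqref{eq:ordinary-chain}.

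\textbf{Equivalence of the two forms.} The signed form \eqref{eq:signed-chain} follows by setting $x=X^{-1}$ and $y=-$ nothing more than a relabelling: take $x=X^{-1}$ and $y=Y^{-1}$. Then $(x^{-1},b)_p=(X,b)_p$ and $(x,y)_p=(X^{-1},Y^{-1})_p=(X,Y)_p$ by bilinearity. Also $(c^{-1},y)_p=(c,Y)_p$. The converse substitution is the same.
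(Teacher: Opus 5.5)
Your strategy is sound and differs from the paper's, but one step fails as written. You cannot use Chebotarev to choose the auxiliary place $w$ so that $B(\bar b,\cdot)$ and $B(\bar c,\cdot)$ are independent at $w$. Chebotarev supplies places unramified in $F(\sqrt[p]{b},\sqrt[p]{c})$ and away from $p$, so $v_w(b)\equiv v_w(c)\equiv 0$ modulo $p$. At such a place $F_w^\times/F_w^{\times p}\cong\Fp^2$ and the unit classes span a line, so $\bar b$ and $\bar c$ are always locally dependent. Local independence can only occur at the finitely many places above $p$ or where $b$ or $c$ has valuation prime to $p$, and nothing forces one of these to lie outside $S$.

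The repair is that independence at $w$ is not needed. You only need $(b,X_w)_w=0$ and $(c,X_w)_w=\gamma$ to be solvable for arbitrary $\gamma$, and this holds as soon as $b\in F_w^{\times p}$ and $c\notin F_w^{\times p}$.
\begin{itemize}
\item If $\bar b,\bar c$ are globally independent, apply Chebotarev in $F(\sqrt[p]{b},\sqrt[p]{c})/F$ with Frobenius fixing $\sqrt[p]{b}$ and moving $\sqrt[p]{c}$. This gives infinitely many such $w$.
\item If $\bar b=s\bar c$ globally with $s\in\Fp^\times$, no such $w$ exists, but none is needed. Every $\beta_v$ is then forced to equal $-s^{-1}\alpha_v$, so $\beta=-s^{-1}\alpha$ is already a global class.
\end{itemize}
Your proposal never treats this second case; it should be handled explicitly.

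With that repair, your proof builds $X$ first. You use the alternating identity $(c,c)_p=0$ to turn the non-symbolic requirement $\bar X\notin\Fp\bar c$ at $v\in S$ into the symbol condition $(c,X)_v=\beta_v\neq0$. You then get $Y$ from a second application of Theorem~\ref{thm:Tate} to the pair $X,c$.

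The paper goes in the opposite order. It writes $Y_v/d$ as a local norm from $F_v(\sqrt[p]{c})$, approximates the local preimages by a global $t\in F(\sqrt[p]{c})^\times$, and sets $Y=d\,N(t)$. Then $(c,Y)_p=(c,d)_p$ holds globally by construction, and $Y\equiv Y_v$ modulo $p$-th powers at $v\in S$, so $(X_v,Y)_v=\alpha_v$. A single application of Tate's lemma to $b,Y$ then produces $X$.

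The paper's route needs only the norm criterion, weak approximation, and one use of Tate. Yours stays entirely within symbol calculus, but it additionally requires three things:
\begin{itemize}
\item the existence of a global Brauer class with prescribed local invariants;
\item a Chebotarev argument for the auxiliary place;
\item bookkeeping of the forced invariants at places of $S$ where $\bar b$ and $\bar c$ are locally dependent.
\end{itemize}
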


\begin{proof}
Set
\[
 \alpha=(a,b)_p=(c,d)_p,
\]
and, for each place $v$ of $F$, let $\alpha_v$ denote the image of $\alpha$
in $\Br(F_v)[p]$. Put
\[
 S=\{v:\alpha_v\neq0\}.
\]
The set $S$ is finite.

For every $v\in S$, Lemma~\ref{lem:local-chain} gives
$X_v,Y_v\in F_v^\times$ such that
\[
 (X_v,b)_{p,v}=(X_v,Y_v)_{p,v}=(c,Y_v)_{p,v}=\alpha_v.
\]
Since $(c,Y_v)_{p,v}=(c,d)_{p,v}$, we have
\[
 (c,Y_v/d)_{p,v}=0.
\]
By the norm criterion for cyclic algebras, there exists
\[
 t_v\in F_v(\sqrt[p]{c})^\times
\]
with
\[
 \frac{Y_v}{d}=N_{F_v(\sqrt[p]{c})/F_v}(t_v).
\]

For $v\in S$ the class $\alpha_v$ is nonzero. Since
$(c,d)_{p,v}=\alpha_v$, the element $c$ cannot lie in $F_v^{\times p}$;
otherwise every symbol with first slot $c$ would vanish. Hence
$F_v(\sqrt[p]{c})/F_v$ is a field extension of degree $p$. In particular,
there is a unique place of $F(\sqrt[p]{c})$ above $v$ giving this completion.

The subgroup of $p$-th powers in a nonarchimedean local field is open. By
weak approximation in $F(\sqrt[p]{c})$, choose
\[
 t\in F(\sqrt[p]{c})^\times
\]
so close to the finitely many $t_v$ at the places above $S$ that, for every
$v\in S$, the quotient $t/t_v$ is a local $p$-th power. Compatibility of
the global norm with completion then gives
\[
 \frac{N_{F(\sqrt[p]{c})/F}(t)}
 {N_{F_v(\sqrt[p]{c})/F_v}(t_v)}
 \in F_v^{\times p}.
\]
Put
\[
 Y=d\,N_{F(\sqrt[p]{c})/F}(t).
\]
Since a norm from $F(\sqrt[p]{c})$ gives a split cyclic algebra,
\[
 (c,Y)_p=(c,d)_p=\alpha.
\]
Moreover, for every $v\in S$ the quotient $Y/Y_v$ is a $p$-th power in
$F_v^\times$, and consequently
\[
 (X_v,Y)_{p,v}=\alpha_v.
\]

For $v\notin S$ we have $\alpha_v=0$, and $X_v=1$ is a simultaneous local
solution of
\[
 (X_v,b)_{p,v}=\alpha_v,
 \qquad
 (X_v,Y)_{p,v}=\alpha_v.
\]
Thus, for every place $v$, there is $X_v\in F_v^\times$ satisfying
\[
 (b,X_v)_{p,v}=-\alpha_v,
 \qquad
 (Y,X_v)_{p,v}=-\alpha_v.
\]
Applying Theorem~\ref{thm:Tate} with
$a_1=b$, $a_2=Y$, and $\alpha_1=\alpha_2=-\alpha$ gives
$X\in F^\times$ such that
\[
 (b,X)_p=-\alpha,
 \qquad
 (Y,X)_p=-\alpha.
\]
By skew-symmetry,
\[
 (X,b)_p=\alpha,
 \qquad
 (X,Y)_p=\alpha.
\]
Together with $(c,Y)_p=\alpha$, this proves
\eqref{eq:ordinary-chain}.

Finally, set
\[
 x=X^{-1},\qquad y=Y^{-1}.
\]
Since inversion in both slots leaves a symbol unchanged,
\[
 (x,y)_p=(X,Y)_p,
 \qquad
 (c^{-1},y)_p=(c,Y)_p,
\]
and \eqref{eq:signed-chain} follows.
\end{proof}

\begin{remark}[Norm data from the signed chain]\label{rem:norm-data}
The signed chain \eqref{eq:signed-chain} gives the following four norm
relations:
\begin{align}
 ax &\in N_{F(\sqrt[p]{b})/F}
    \bigl(F(\sqrt[p]{b})^\times\bigr),\label{eq:norm1}\\
 x^{-1} &\in N_{F(\sqrt[p]{by})/F}
    \bigl(F(\sqrt[p]{by})^\times\bigr),\label{eq:norm2}\\
 y^{-1} &\in N_{F(\sqrt[p]{xc})/F}
    \bigl(F(\sqrt[p]{xc})^\times\bigr),\label{eq:norm3}\\
 yd &\in N_{F(\sqrt[p]{c})/F}
    \bigl(F(\sqrt[p]{c})^\times\bigr).\label{eq:norm4}
\end{align}
For example, the equality $(x^{-1},b)_p=(x,y)_p$ gives
\[
 (x,by)_p=0,
\]
so, by skew-symmetry, $(by,x^{-1})_p=0$, which is exactly
\eqref{eq:norm2}. Likewise,
\[
 (a,b)_p=(x^{-1},b)_p
 \quad\Longrightarrow\quad
 (ax,b)_p=0,
\]
\[
 (x,y)_p=(c^{-1},y)_p
 \quad\Longrightarrow\quad
 (xc,y)_p=0,
\]
and
\[
 (c^{-1},y)_p=(c,d)_p
 \quad\Longrightarrow\quad
 (c,yd)_p=0,
\]
which yield \eqref{eq:norm1}, \eqref{eq:norm3}, and \eqref{eq:norm4} by
the norm criterion.
\end{remark}

\section{The Heisenberg central product: obstruction and construction}

We first identify the central-embedding obstruction for the Heisenberg
central product and obtain the exact solvability criterion. We then use the
common-slot chain from Section~2 to convert obstruction vanishing into the
structured norm data underlying the explicit construction of
Section~\ref{subsec:explicit}.

\subsection{The target group and its obstruction}

Let $\Hp$ be the Heisenberg group of order $p^3$ and exponent $p$. We use
the commutator convention
\[
 [g,h]=g^{-1}h^{-1}gh
\]
and the presentation
\[
 \Hp=\langle u,v,\tau\mid
 u^p=v^p=\tau^p=1,\ \tau\text{ central},\ [u,v]=\tau\rangle.
\]

\begin{definition}\label{def:E}
Let
\[
 \E=\Hp*\Hp
\]
be the central product obtained by identifying the centers of two copies of
$\Hp$ with opposite orientation. Equivalently,
\[
 \E=\langle s_a,s_b,s_c,s_d,\tau\mid
 s_a^p=s_b^p=s_c^p=s_d^p=\tau^p=1,
\]
\[
 \tau\text{ central},\quad
 [s_a,s_b]=\tau,\quad [s_c,s_d]=\tau^{-1},
\]
\[
 [s_a,s_c]=[s_a,s_d]=[s_b,s_c]=[s_b,s_d]=1\rangle.
\]
Then $|\E|=p^5$, and there is a central extension
\begin{equation}\label{eq:E-extension}
 1\longrightarrow C_p\longrightarrow\E\longrightarrow C_p^4\longrightarrow1.
\end{equation}
\end{definition}

We recall the standard obstruction formula for central $\mu_p$-embedding
problems over elementary abelian Kummer extensions. This belongs to the
classical explicit theory of central $p$-extensions developed by Massy; we
use the formulation given in \cite[Corollary~6.1.6]{Ledet}; see also
\cite{Massy}.

\begin{theorem}[Central embedding obstruction]\label{thm:obstruction}
Assume $\mu_p\subset F$ and let
\[
 K=F(\sqrt[p]{a_1},\ldots,\sqrt[p]{a_n})
\]
be a $C_p^n$-extension. Let
\[
 1\longrightarrow\mu_p\longrightarrow E\longrightarrow C_p^n\longrightarrow1
\]
be a central extension, and choose lifts $s_i\in E$ of the standard Kummer
generators. Write
\[
 s_j s_i=\zeta^{d_{ij}}s_i s_j\quad(i<j),
 \qquad
 s_i^p=\zeta^{d_i}.
\]
Then the obstruction to the embedding problem is
\[
 \prod_{i=1}^n(a_i,\zeta)_p^{d_i}
 \prod_{i<j}(a_j,a_i)_p^{d_{ij}}
 \in\Br(F)[p].
\]
The embedding problem is solvable if and only if this class is zero.
\end{theorem}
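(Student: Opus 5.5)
We will prove the obstruction formula by the standard cohomological route: identify the obstruction with the image of the extension class under inflation, and then compute it through the explicit cocycle attached to the lifts $s_i$. Throughout, identify $\mu_p$ with $\Fp$ via $\zeta$ and $\Gal(K/F)$ with $C_p^n$ via Kummer theory. Let $\sigma_i$ denote the generator with $\sigma_i(\sqrt[p]{a_j})=\zeta^{\delta_{ij}}\sqrt[p]{a_j}$.

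The embedding problem given by $\varepsilon\in H^2(C_p^n,\mu_p)$ is solvable if and only if $\inf(\varepsilon)=0$ in $H^2(G_F,\mu_p)=\Br(F)[p]$. This is the classical Hoechsmann criterion for central embedding problems with kernel $\mu_p$, in the form recorded in \cite{Ledet}. So the task reduces to computing $\inf(\varepsilon)$ as a sum of symbols.

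The computation rests on the structure of $H^2(C_p^n,\Fp)$ for odd $p$. This group has a basis consisting of the Bocksteins $\beta(\chi_i)$ and the cup products $\chi_i\cup\chi_j$ for $i<j$, where $\chi_i$ is the dual character. An extension class is determined by the power map $s_i\mapsto s_i^p$ and the commutator form $(s_i,s_j)\mapsto[s_j,s_i]$ on lifts. Concretely:
\begin{itemize}
\item the coefficient of $\beta(\chi_i)$ is $d_i$;
\item the coefficient of $\chi_i\cup\chi_j$ is $\pm d_{ij}$.
\end{itemize}
To see this, we compare with the two test extensions $C_{p^2}$, where $s^p=\zeta$, and the Heisenberg group $\Hp$. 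For each of these one checks the invariants directly, and both sides are additive in $\varepsilon$.

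Next, Kummer theory gives $\inf(\chi_i)=\delta(a_i)$, the Kummer class of $a_i$. Hence:
\begin{itemize}
\item $\inf(\chi_i\cup\chi_j)=\delta(a_i)\cup\delta(a_j)=(a_i,a_j)_p$;
\item $\inf\beta(\chi_i)=\delta(a_i)\cup\delta(\zeta)=(a_i,\zeta)_p$, using the standard identity that the Bockstein of a Kummer character is its cup product with the class of $\zeta$.
\end{itemize}
Summing these contributions gives $\prod_i(a_i,\zeta)_p^{d_i}\prod_{i<j}(a_j,a_i)_p^{d_{ij}}$ up to a global sign convention.

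The main obstacle is getting the signs right. The convention $s_js_i=\zeta^{d_{ij}}s_is_j$ must match the reversed symbol $(a_j,a_i)_p$, and this depends on the chosen normalisation of the cup product and of the symbol algebra. I would fix these conventions by checking the single case $n=2$, $E=\Hp$. There the obstruction should be $(a_1,a_2)_p$ or its inverse, and this can be verified by realising the Heisenberg extension explicitly: it exists exactly when $a_2$ is a norm from $F(\sqrt[p]{a_1})$. Tracing which orientation of the commutator arises from the norm element $\sqrt[p]{a_2}\mapsto \sqrt[p]{a_2}\,\theta$ with $N(\theta)=a_2$ pins down the sign. This check is harmless for the vanishing criterion, but it matters for the paper's later claim that the obstruction is exactly $(a,b)_p-(c,d)_p$.
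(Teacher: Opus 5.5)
\textbf{How this compares to the paper.} The paper gives no proof of this statement; it quotes it from Ledet (Corollary~6.1.6), going back to Massy. Your skeleton is the standard derivation behind that citation, and its structural parts are sound:
\begin{itemize}
\item Hoechsmann's criterion, namely that the problem is solvable iff $\inf(\varepsilon)=0$ in $\Br(F)[p]$;
\item the basis $\beta(\chi_i)$, $\chi_i\cup\chi_j$ of $H^2(C_p^n,\Fp)$, detected by the power map and the commutator pairing (this uses $p\mid\binom p2$, so that $s\mapsto s^p$ is additive on the quotient);
\item inflation through Kummer theory, together with $\beta(\kappa_a)=\kappa_a\cup\kappa_\zeta$.
\end{itemize}

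\textbf{The gap: the sign is not global.} The sign bookkeeping is not a ``global sign convention.'' The relative sign between the Bockstein contributions and the cup-product contributions is intrinsic, and the statement depends on it.

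Take $E=M_{p^3}$ with $s_1^p=\zeta$ and $s_2s_1=\zeta^{-1}s_1s_2$, so $d_1=1$ and $d_{12}=-1$. The statement predicts the obstruction $(a_1,\zeta a_2)_p$. Flipping the sign of the cup-product part would give $(a_1,\zeta a_2^{-1})_p$, which is a genuinely different solvability criterion. The global ambiguity comes from the symbol convention and the identification of the kernel with $\mu_p$, and it rescales both kinds of terms together. Your bullets fix the Bockstein sign but leave $\pm d_{ij}$ open, so the formula as stated is not yet proved.

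\textbf{Why the $\Hp$ check cannot close it.} Your proposed calibration on $\Hp$ cannot determine this sign, or indeed any sign. The Heisenberg problem is solvable iff $(a_1,a_2)_p=0$, which holds iff $-(a_1,a_2)_p=0$. In the explicit construction, replacing $\theta$ by $\theta^{-1}$ (a norm of $a_2^{-1}$) reverses the orientation of the commutator. Since the kernel identification induced by a solution is not the Kummer one, ``tracing the orientation'' carries no information.

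\textbf{How to fix it.} Either carry one fixed set of conventions through every step (the cocycle attached to an extension, the Bockstein, the cup product, the Kummer map, the symbol algebra), or calibrate on a mixed case where both kinds of terms occur. For example:
\begin{itemize}
\item take $\theta\in F(\sqrt[p]{a_1})$ with $N(\theta)=(\zeta a_2)^{-1}$ and put $w=\sqrt[p]{a_1}\,W_{a_1}(\theta)$;
\item then $\sigma_2(w)=w$ and $\sigma_1(w)/w=(\sqrt[p]{a_2}\,\theta)^{-p}$;
\item so $\widetilde\sigma_1(z)=z/(\sqrt[p]{a_2}\,\theta)$ and $\widetilde\sigma_2(z)=z$;
\item one computes $\widetilde\sigma_1^p=\tau=[\widetilde\sigma_1,\widetilde\sigma_2]$.
\end{itemize}
This is exactly the $M_{p^3}$ relation. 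So $(a_1,\zeta a_2)_p=0$ suffices. Combined with your structural result, that the obstruction is $(a_1,\zeta)_p\pm(a_1,a_2)_p$, this rules out the other sign wherever $(a_1,\zeta a_2)_p=0\neq(a_1,a_2)_p$.

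\textbf{Your closing remark overstates the stakes.} For the paper's Corollary, all $d_i=0$ and the cup-product sign is uniform over pairs. Your argument therefore already yields $\pm\bigl((a,b)_p-(c,d)_p\bigr)$, and the criterion $(a,b)_p=(c,d)_p$ does not depend on that global sign.
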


\begin{corollary}\label{cor:obstruction-E}
Let
\[
 K=F(\sqrt[p]{a},\sqrt[p]{b},\sqrt[p]{c},\sqrt[p]{d})
\]
be a $C_p^4$-extension. For the central extension
\eqref{eq:E-extension}, the obstruction is
\[
 (a,b)_p-(c,d)_p.
\]
Consequently, the embedding problem is solvable if and only if
\[
 (a,b)_p=(c,d)_p.
\]
\end{corollary}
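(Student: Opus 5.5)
The plan is to apply Theorem~\ref{thm:obstruction} directly with $n=4$, ordered generators $(a_1,a_2,a_3,a_4)=(a,b,c,d)$ and lifts $s_1=s_a$, $s_2=s_b$, $s_3=s_c$, $s_4=s_d$. This requires first identifying the kernel $\langle\tau\rangle$ with $\mu_p$ via $\tau\mapsto\zeta$. I would fix that identification once and note that the choice of $\zeta$ is irrelevant. Indeed, changing $\zeta$ to $\zeta^k$ rescales both the $d_{ij}$ and the identification consistently, so the resulting class changes only by a unit multiple. That does not affect its vanishing. However, it does affect the exact sign, so I will simply fix $\tau\leftrightarrow\zeta$.

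First I read off the exponents $d_i$. All $s_i$ have order $p$ in $\E$, so $s_i^p=1=\zeta^0$ and every $d_i=0$. The terms $(a_i,\zeta)_p^{d_i}$ therefore all vanish. This is where the choice of lifts matters: one must use exactly the lifts of Definition~\ref{def:E}, which have exponent $p$ because $\E$ has exponent $p$.

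Next I compute the $d_{ij}$ from the commutator convention $[g,h]=g^{-1}h^{-1}gh$. The relation $s_js_i=\zeta^{d_{ij}}s_is_j$ is equivalent to $s_i^{-1}s_j^{-1}s_is_j=\zeta^{-d_{ij}}$, i.e.\ $[s_i,s_j]=\zeta^{-d_{ij}}$ (centrality of $\tau$ is used here). Hence $[s_a,s_b]=\tau$ gives $d_{12}=-1$, and $[s_c,s_d]=\tau^{-1}$ gives $d_{34}=1$. All other $d_{ij}$ vanish because the remaining pairs commute.

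Substituting into the formula gives
\[
 (b,a)_p^{-1}(d,c)_p,
\]
which in additive notation is $-(b,a)_p+(d,c)_p=(a,b)_p-(c,d)_p$ by skew-symmetry $(u,v)_p=-(v,u)_p$. The solvability criterion then follows from the last sentence of Theorem~\ref{thm:obstruction}.

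The only real obstacle is the bookkeeping of signs between the commutator convention and the convention $s_js_i=\zeta^{d_{ij}}s_is_j$ in Theorem~\ref{thm:obstruction}. I would double-check this in the single Heisenberg case $\Hp$ over $F(\sqrt[p]{a},\sqrt[p]{b})$, where the obstruction should be $\pm(a,b)_p$. The essential point is that the two factors carry opposite signs, so the obstruction is a difference. A global sign flip would still give the same vanishing criterion.
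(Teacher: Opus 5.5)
Your proposal is correct and is essentially the paper's proof: both read off $d_i=0$, $d_{12}=-1$, $d_{34}=1$ from the convention $[g,h]=g^{-1}h^{-1}gh$, then use skew-symmetry to rewrite $(b,a)_p^{-1}(d,c)_p$ as $(a,b)_p-(c,d)_p$. One small correction: you say the choice of lifts matters for the $d_i$, but any two lifts differ by a central element of order $p$ and so have the same $p$-th power; this does not affect the argument.
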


\begin{proof}
From $[s_a,s_b]=\tau$ and our commutator convention,
\[
 s_b s_a=\tau^{-1}s_a s_b,
\]
so the $(a,b)$-pair contributes
$(b,a)_p^{-1}=(a,b)_p$. Likewise,
$[s_c,s_d]=\tau^{-1}$ gives
\[
 s_d s_c=\tau s_c s_d,
\]
so the $(c,d)$-pair contributes
$(d,c)_p=-(c,d)_p$. All $p$-power and cross-commutator contributions are
trivial.
\end{proof}

\begin{theorem}\label{thm:global-realizability}
Let $F$ be a global field, let $p$ be an odd prime with
$p\neq\operatorname{char}F$, and assume $\mu_p\subset F$. Let
\[
 K=F(\sqrt[p]{a},\sqrt[p]{b},\sqrt[p]{c},\sqrt[p]{d})
\]
be a $C_p^4$-extension. There exists a Galois extension $L/F$ containing
$K$ such that
\[
 \Gal(L/F)\cong\Hp*\Hp
\]
and the natural map $\Gal(L/F)\to\Gal(K/F)$ is the quotient map in
\eqref{eq:E-extension} if and only if
\[
 (a,b)_p=(c,d)_p.
\]
\end{theorem}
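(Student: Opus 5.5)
This is a direct translation of Corollary~\ref{cor:obstruction-E} into the language of Galois extensions. The only work is matching the notion of ``solvable embedding problem'' in Theorem~\ref{thm:obstruction} with the existence of $L$ as stated. I will also check that the Galois group is all of $\E$ rather than a proper subgroup.

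First, fix the data of the embedding problem. Since $\mu_p\subset F$, choose a primitive $p$-th root of unity $\zeta$ and identify the center $\langle\tau\rangle$ of $\E$ with $\mu_p$ via $\tau\mapsto\zeta$. The extension \eqref{eq:E-extension} then becomes a central $\mu_p$-extension of $C_p^4$. Identify $C_p^4$ with $\Gal(K/F)$ so that the images of $s_a,s_b,s_c,s_d$ are the standard Kummer generators, namely the automorphisms moving exactly one of the radicals $\sqrt[p]{a},\ldots,\sqrt[p]{d}$ by $\zeta$.

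With this identification, the embedding problem asks for an $L\supseteq K$, Galois over $F$, together with an isomorphism $\Gal(L/F)\cong\E$ compatible with restriction to $K$. Here ``solvable'' must mean a \emph{proper} solution, i.e.\ $L$ a field with the full group $\E$. I will note that for this particular $\E$ the weak and proper notions agree. A weak solution gives a homomorphism $\Gal(\bar F/F)\to\E$ whose composite to $C_p^4$ is surjective. Since $\E$ is extraspecial, its Frattini subgroup is $\Phi(\E)=Z(\E)=\langle\tau\rangle$. A subgroup surjecting onto $\E/\Phi(\E)$ is therefore all of $\E$, so every weak solution is proper. If the cited Corollary~6.1.6 of Ledet is already phrased for proper solutions, this step is just a remark.

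For ($\Rightarrow$), suppose $L$ exists. Then the embedding problem has a solution, so by Theorem~\ref{thm:obstruction} the obstruction vanishes. By Corollary~\ref{cor:obstruction-E} this means $(a,b)_p-(c,d)_p=0$.

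For ($\Leftarrow$), assume $(a,b)_p=(c,d)_p$. Corollary~\ref{cor:obstruction-E} says the obstruction class is zero, so by Theorem~\ref{thm:obstruction} the embedding problem has a solution. By the Frattini remark above, that solution is a field $L\supseteq K$, Galois over $F$, with $\Gal(L/F)\cong\E$ and restriction equal to the quotient map in \eqref{eq:E-extension}.

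I do not expect a real obstacle here. The only points needing care are the compatibility of sign conventions, meaning that $\tau\mapsto\zeta$ matches the $d_{ij}$ used in Corollary~\ref{cor:obstruction-E}, and the weak-versus-proper issue. A different identification, such as $\tau\mapsto\zeta^{-1}$, only multiplies the obstruction by a unit of $\Fp$ and so does not change when it vanishes. The explicit construction through the chain of Theorem~\ref{thm:global-chain} is not needed for existence; it gives a refinement to be developed afterwards.
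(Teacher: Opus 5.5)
Your proposal is correct and follows the paper's proof, which simply combines Corollary~\ref{cor:obstruction-E} with ``the standard interpretation of solvability of the embedding problem.'' Your Frattini argument, $\Phi(\E)=Z(\E)=\langle\tau\rangle$, shows that any weak solution is automatically proper, and so makes explicit a point the paper leaves implicit in that phrase.
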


\begin{proof}
This is exactly Corollary~\ref{cor:obstruction-E} together with the standard
interpretation of solvability of the embedding problem.
\end{proof}

\begin{remark}
Theorem~\ref{thm:global-realizability} gives the existence criterion, while
the common-slot chain supplies additional constructive structure. In the
next subsection, its signed normalization produces a coordinated system of
norm equations from which the desired extension is written explicitly.
\end{remark}

\subsection{A chain-adapted explicit construction in the generic case}
\label{subsec:explicit}

We now exploit the four norm equations supplied by the common-slot chain to
construct the Heisenberg central product in a factorized radical form. This
realization is adapted directly to the arithmetic structure developed in
Section~2 and may be viewed alongside the general explicit construction
theories of Massy and Swallow; see
\cite{Massy,SwallowConstructible,SwallowCentral}.

Assume
\[
 (a,b)_p=(c,d)_p,
\]
and let $x,y\in F^\times$ satisfy the signed chain
\eqref{eq:signed-chain}. By Remark~\ref{rem:norm-data}, choose
\begin{align*}
 f_1&\in F(\sqrt[p]{b})^\times,
 &N_b(f_1)&=ax,\\
 f_2&\in F(\sqrt[p]{by})^\times,
 &N_{by}(f_2)&=x^{-1},\\
 f_3&\in F(\sqrt[p]{xc})^\times,
 &N_{xc}(f_3)&=y^{-1},\\
 f_4&\in F(\sqrt[p]{c})^\times,
 &N_c(f_4)&=yd.
\end{align*}
Here $N_u$ denotes the norm from $F(\sqrt[p]{u})$ to $F$.

For a cyclic Kummer extension $F(\sqrt[p]{u})/F$, let $\rho_u$ be the
generator satisfying
\[
 \rho_u(\sqrt[p]{u})=\zeta\sqrt[p]{u}.
\]
Define
\begin{equation}\label{eq:W-def}
 W_u(f)=\prod_{i=0}^{p-2}\rho_u^i(f)^{p-1-i}
 =f^{p-1}\rho_u(f)^{p-2}\cdots\rho_u^{p-2}(f).
\end{equation}

\begin{lemma}\label{lem:W-ratio}
For $f\in F(\sqrt[p]{u})^\times$,
\[
 \frac{\rho_u(W_u(f))}{W_u(f)}=\frac{N_u(f)}{f^p}.
\]
\end{lemma}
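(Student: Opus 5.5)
The plan is a direct telescoping computation, using only that $\rho_u$ is a field automorphism of order $p$ and that $N_u(f)=\prod_{i=0}^{p-1}\rho_u^i(f)$. First I treat the degenerate case: if $u\in F^{\times p}$, then $F(\sqrt[p]{u})=F$, $\rho_u$ is the identity, and $N_u(f)=f^p$. In that case both sides equal $1$. From now on I assume $F(\sqrt[p]{u})/F$ is cyclic of degree $p$ with generator $\rho_u$.

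Write $g_i=\rho_u^i(f)$ for $i=0,\dots,p-1$, with indices read modulo $p$, so that $g_p=g_0=f$. Then
\[
 W_u(f)=\prod_{i=0}^{p-2}g_i^{\,p-1-i}.
\]
Applying the automorphism $\rho_u$ shifts each index by one:
\[
 \rho_u(W_u(f))=\prod_{i=0}^{p-2}g_{i+1}^{\,p-1-i}=\prod_{j=1}^{p-1}g_j^{\,p-j}.
\]
Next I divide $\rho_u(W_u(f))$ by $W_u(f)$ and compare exponents index by index:
\begin{itemize}
 \item For $1\le j\le p-2$, the numerator contributes exponent $p-j$ and the denominator contributes $p-1-j$, so the net exponent of $g_j$ is $1$.
 \item For $j=p-1$, the numerator contributes $1$ and the denominator contributes nothing, so the net exponent is $1$.
 \item For $j=0$, i.e.\ $g_0=f$, the denominator contributes $p-1$, so the net exponent is $-(p-1)$.
\end{itemize}
Hence
\[
 \frac{\rho_u(W_u(f))}{W_u(f)}=f^{-(p-1)}\prod_{j=1}^{p-1}g_j
 =\frac{\prod_{j=0}^{p-1}g_j}{f^{p}}=\frac{N_u(f)}{f^p}.
\]

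There is no real obstacle. The only point needing care is the exponent bookkeeping at the two ends, $j=0$ and $j=p-1$. One should also note that $f^p$ is nonzero because $f\in F(\sqrt[p]{u})^\times$, so the quotient is defined.
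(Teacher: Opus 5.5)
Your proof is correct and is the same index-shift computation the paper gives: applying $\rho_u$ raises each index by one, and dividing by $W_u(f)$ leaves each $\rho_u^j(f)$, $1\le j\le p-1$, exactly once together with $f^{-(p-1)}$, which gives $N_u(f)/f^p$. Your separate degenerate case can simply be dropped. The lemma is only meant for $F(\sqrt[p]{u})/F$ cyclic of degree $p$, where $\rho_u$ is defined by $\rho_u(\sqrt[p]{u})=\zeta\sqrt[p]{u}$, and taking $\rho_u$ to be the identity is not compatible with that definition.
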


\begin{proof}
Shifting the factors in \eqref{eq:W-def} gives
\[
 \rho_u(W_u(f))
 =\rho_u(f)^{p-1}\rho_u^2(f)^{p-2}\cdots\rho_u^{p-1}(f).
\]
Dividing by $W_u(f)$ leaves one copy of each conjugate $\rho_u^i(f)$ for
$1\le i\le p-1$ and $f^{-(p-1)}$, hence
\[
 \frac{\rho_u(W_u(f))}{W_u(f)}
 =\frac{\rho_u(f)\cdots\rho_u^{p-1}(f)}{f^{p-1}}
 =\frac{N_u(f)}{f^p}.
\]
\end{proof}

Set
\[
 w_1=W_b(f_1),\qquad
 w_2=W_{by}(f_2),\qquad
 w_3=W_{xc}(f_3),\qquad
 w_4=W_c(f_4),
\]
and
\[
 w=w_1w_2w_3w_4.
\]

\begin{theorem}\label{thm:explicit}
Assume, in addition to the hypotheses of
Theorem~\ref{thm:global-realizability}, that
\[
 (a,b)_p=(c,d)_p
\]
and that the six classes
\[
 a,b,c,d,x,y\in F^\times/F^{\times p}
\]
are linearly independent. Put
\[
 M=F(\sqrt[p]{a},\sqrt[p]{b},\sqrt[p]{c},\sqrt[p]{d},
   \sqrt[p]{x},\sqrt[p]{y}).
\]
Choose $r\in F^\times$ such that $rw\notin M^{\times p}$, put $w'=rw$,
and set
\[
 \widetilde L=M(\sqrt[p]{w'}).
\]
Then
\[
 \Gal(\widetilde L/F)\cong(\Hp*\Hp)\times C_p^2.
\]
If $A\cong C_p^2$ denotes the factor generated by the two auxiliary Kummer
directions $x$ and $y$, then
\[
 L=\widetilde L^A
\]
contains $K$ and satisfies
\[
 \Gal(L/F)\cong\Hp*\Hp.
\]
\end{theorem}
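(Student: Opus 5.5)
The plan is to verify directly, by Kummer theory over $M$, that $\widetilde L/F$ is Galois and to compute the commutator and power relations among explicit lifts of the generators of $\Gal(M/F)\cong C_p^6$. By the independence hypothesis, $\Gal(M/F)$ has a dual basis $\sigma_a,\sigma_b,\sigma_c,\sigma_d,\sigma_x,\sigma_y$, where $\sigma_u$ multiplies $\sqrt[p]{u}$ by $\zeta$ and fixes the other five radicals. Fix $p$-th roots $\sqrt[p]{by}=\sqrt[p]{b}\sqrt[p]{y}$ and $\sqrt[p]{xc}=\sqrt[p]{x}\sqrt[p]{c}$. Then each $\sigma_u$ restricts to a power of $\rho_b,\rho_{by},\rho_{xc},\rho_c$ on the four fields containing $f_1,\dots,f_4$.

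\emph{Galois property.} First compute $\sigma(w)/w$ for each basis element $\sigma$ using Lemma~\ref{lem:W-ratio}; since $r\in F^\times$, $\sigma(w')/w'=\sigma(w)/w$.
\begin{itemize}
\item $\sigma_a$ and $\sigma_d$ fix $w$.
\item $\sigma_b$ acts as $\rho_b$ on $w_1$ and as $\rho_{by}$ on $w_2$, so $\sigma_b(w)/w=(ax)\,x^{-1}/(f_1f_2)^p=a/(f_1f_2)^p$.
\item $\sigma_c$ gives $y^{-1}\cdot yd/(f_3f_4)^p=d/(f_3f_4)^p$.
\item $\sigma_x$ gives $y^{-1}/f_3^p$.
\item $\sigma_y$ gives $x^{-1}/f_2^p$.
\end{itemize}
All of these lie in $M^{\times p}$. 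Hence the subgroup $\langle w'\rangle M^{\times p}$ is $\Gal(M/F)$-stable, and $\widetilde L/F$ is Galois of degree $p^7$; the choice of $r$ guarantees $[\widetilde L:M]=p$. The signs in the signed chain make the $x$-factor cancel in $\sigma_b$ and the $y$-factor cancel in $\sigma_c$. So I take
\begin{align*}
\theta_a&=\theta_d=1, & \theta_b&=\sqrt[p]{a}/(f_1f_2), & \theta_c&=\sqrt[p]{d}/(f_3f_4),\\
\theta_x&=\sqrt[p]{y}^{-1}/f_3, & \theta_y&=\sqrt[p]{x}^{-1}/f_2, & &
\end{align*}
and lift each $\sigma_u$ to $s_u$ with $s_u(\sqrt[p]{w'})=\theta_u\sqrt[p]{w'}$. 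Let $\tau$ be the generator of $\Gal(\widetilde L/M)$ with $\tau(\sqrt[p]{w'})=\zeta\sqrt[p]{w'}$.

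\emph{Relations.} The commutator $[s_u,s_v]$ acts on $\sqrt[p]{w'}$ by
\[
\frac{\sigma_u(\theta_v)/\theta_v}{\sigma_v(\theta_u)/\theta_u}
\]
up to inversion, depending on the convention. Each $\sigma_u$ fixes every $f_i$ whose field it does not move, and the $f$-factors that are moved contribute ratios of conjugates that cancel symmetrically between the two terms. Thus $[s_u,s_v]$ should be determined by the radicals alone. This yields $[s_a,s_b]=\tau^{\pm1}$ and $[s_c,s_d]=\tau^{\mp1}$, with opposite orientation because $\sqrt[p]{a}$ and $\sqrt[p]{d}$ sit in $\theta_b$ and $\theta_c$ on the same side. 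For the pair $x,y$, the two contributions $\sigma_x(\theta_y)/\theta_y=\zeta^{-1}$ and $\sigma_y(\theta_x)/\theta_x=\zeta^{-1}$ cancel, so $[s_x,s_y]=1$. All other pairs commute as well; for instance $\sigma_x$ fixes $\theta_b$ because no $\sqrt[p]{x}$ survives there. For $p$-th powers, $s_u^p$ acts by $N(\theta_u)=\prod_{i=0}^{p-1}\sigma_u^i(\theta_u)$, which I expect to be $1$ after using $N_u(f_i)$ and the fact that $p$ is odd. The normalization of $\tau$ and the sign convention need to be matched against Definition~\ref{def:E}, possibly after replacing $\tau$ by $\tau^{-1}$ or rescaling some $\theta_u$ by roots of unity.

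\emph{Conclusion and main obstacle.} With these relations, $\langle s_a,s_b,s_c,s_d,\tau\rangle\cong\Hp*\Hp$, and $A=\langle s_x,s_y\rangle\cong C_p^2$ is central and meets this subgroup trivially. Since the order is $p^7$, $\Gal(\widetilde L/F)\cong(\Hp*\Hp)\times A$. The fixed field $L=\widetilde L^A$ is Galois over $F$ because $A$ is normal, and $\Gal(L/F)\cong\Hp*\Hp$. Also $L\supseteq K$, since $A$ acts trivially on $\sqrt[p]{a},\dots,\sqrt[p]{d}$, and the restriction to $K$ is the quotient map in \eqref{eq:E-extension}. The main obstacle is the careful bookkeeping in the commutator and power computations. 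One must confirm that the $f_i$-factors, which are moved by $\sigma_b$ (on $f_1,f_2$), $\sigma_y$ (on $f_2$), and $\sigma_x,\sigma_c$ (on $f_3$), genuinely cancel in the cocycle expressions $\sigma_u(\theta_v)\theta_u/(\sigma_v(\theta_u)\theta_v)$. I would first check this with the explicit shifting formula behind Lemma~\ref{lem:W-ratio}, and if necessary replace $\theta_u$ by $\theta_u$ times a suitable conjugate product of $f_i$ to force the norms $N(\theta_u)$ to equal $1$.
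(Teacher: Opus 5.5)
Your proposal is correct and is essentially the paper's proof. It uses the same lifts $z\mapsto\theta_u z$, the same commutator expression $\frac{\sigma_u(\theta_v)/\theta_v}{\sigma_v(\theta_u)/\theta_u}$ (exactly, with no inversion, for $[g,h]=g^{-1}h^{-1}gh$, so $[s_a,s_b]=\tau$ and $[s_c,s_d]=\tau^{-1}$ match Definition~\ref{def:E} directly), and the same cancellations supplied by the signed chain. The steps you left hedged go through with no rescaling and no use of oddness, e.g.\ $\prod_i\sigma_b^i(\theta_b)=a/(N_b(f_1)N_{by}(f_2))=a/(ax\cdot x^{-1})=1$ because $\sigma_b$ fixes $\sqrt[p]{a}$; beyond that, the paper only adds a short check that some $r$ with $rw\notin M^{\times p}$ exists (the bad classes form at most one coset of the $p^6$-element kernel of $F^\times/F^{\times p}\to M^\times/M^{\times p}$, inside an infinite group).
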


\begin{proof}
First we justify the choice of $r$. By elementary Kummer theory, the kernel
of the natural map
\[
 F^\times/F^{\times p}\longrightarrow M^\times/M^{\times p}
\]
is precisely the six-dimensional subspace generated by the classes of
$a,b,c,d,x,y$. Indeed, if $u\in F^\times$ becomes a $p$-th power in $M$,
then $F(\sqrt[p]{u})$ is a degree-$p$ subextension of $M/F$, and hence its
Kummer class lies in the span of the six defining classes; the converse is
immediate.

Moreover, $F^\times/F^{\times p}$ is infinite for a global field. For
example, given any finite set of classes, choose a finite place outside the
supports of representatives of those classes and use weak approximation to
produce an element having valuation $1$ at that place. Its class cannot lie
in the prescribed finite-dimensional subspace. The set of classes
$[r]\in F^\times/F^{\times p}$ for which $rw\in M^{\times p}$ is either
empty or a single coset of the above kernel: if both $r_1w$ and $r_2w$ are
$p$-th powers in $M$, then $r_1/r_2$ is a $p$-th power in $M$. Hence we may
choose $r\in F^\times$ with $rw\notin M^{\times p}$. Multiplying $w$ by
$r\in F^\times$ does not change any quotient $\sigma(w)/w$.

Let
\[
 z=\sqrt[p]{w'}.
\]
Since $w'\notin M^{\times p}$, the extension $\widetilde L/M$ has degree
$p$. Because $r\in F^\times$, for every $\sigma\in\Gal(M/F)$ we have
\[
 \frac{\sigma(w')}{w'}=\frac{\sigma(w)}{w}.
\]
Because the six Kummer classes are independent,
\[
 \Gal(M/F)=
 \langle\sigma_a,\sigma_b,\sigma_c,\sigma_d,\sigma_x,\sigma_y\rangle
 \cong C_p^6,
\]
where each $\sigma_u$ multiplies $\sqrt[p]{u}$ by $\zeta$ and fixes the
other five chosen radicals.

On $F(\sqrt[p]{by})$, the restrictions of $\sigma_b$ and $\sigma_y$ are both
$\rho_{by}$; similarly, on $F(\sqrt[p]{xc})$, the restrictions of
$\sigma_x$ and $\sigma_c$ are both $\rho_{xc}$. Lemma~\ref{lem:W-ratio}
and the four norm equations therefore give
\begin{align*}
 \frac{\sigma_b(w)}{w}
 &=\frac{ax}{f_1^p}\frac{x^{-1}}{f_2^p}
 =\left(\frac{\sqrt[p]{a}}{f_1f_2}\right)^p,\\
 \frac{\sigma_y(w)}{w}
 &=\frac{x^{-1}}{f_2^p}
 =\left(\frac{1}{\sqrt[p]{x}f_2}\right)^p,\\
 \frac{\sigma_c(w)}{w}
 &=\frac{y^{-1}}{f_3^p}\frac{yd}{f_4^p}
 =\left(\frac{\sqrt[p]{d}}{f_3f_4}\right)^p,\\
 \frac{\sigma_x(w)}{w}
 &=\frac{y^{-1}}{f_3^p}
 =\left(\frac{1}{\sqrt[p]{y}f_3}\right)^p,
\end{align*}
and $\sigma_a(w)=\sigma_d(w)=w$.

Hence the Kummer automorphisms extend to $\widetilde L$ by
\begin{align*}
 \widetilde\sigma_a(z)&=z,
 &\widetilde\sigma_d(z)&=z,\\
 \widetilde\sigma_b(z)&=\frac{\sqrt[p]{a}}{f_1f_2}z,
 &\widetilde\sigma_y(z)&=\frac{1}{\sqrt[p]{x}f_2}z,\\
 \widetilde\sigma_c(z)&=\frac{\sqrt[p]{d}}{f_3f_4}z,
 &\widetilde\sigma_x(z)&=\frac{1}{\sqrt[p]{y}f_3}z.
\end{align*}
Let $\tau$ be the automorphism fixing $M$ and satisfying
\[
 \tau(z)=\zeta z.
\]
These lifts generate $\Gal(\widetilde L/F)$ because their restrictions
generate $\Gal(M/F)$ and they contain the generator $\tau$ of
$\Gal(\widetilde L/M)$.

Each displayed lift has order $p$. For example,
\[
 \prod_{i=0}^{p-1}\sigma_b^i
 \left(\frac{\sqrt[p]{a}}{f_1f_2}\right)
 =\frac{a}{N_b(f_1)N_{by}(f_2)}
 =\frac{a}{(ax)(x^{-1})}=1,
\]
and
\[
 \prod_{i=0}^{p-1}\sigma_y^i
 \left(\frac{1}{\sqrt[p]{x}f_2}\right)
 =\frac{1}{xN_{by}(f_2)}=1.
\]
The $c$- and $x$-cases are identical; the $a$- and $d$-lifts plainly have
order $p$.

For commuting quotient automorphisms $\sigma_i,\sigma_j$ with lifts
\[
 \widetilde\sigma_i(z)=q_i z,
 \qquad
 \widetilde\sigma_j(z)=q_j z,
\]
our commutator convention gives
\begin{equation}\label{eq:comm-formula}
 [\widetilde\sigma_i,\widetilde\sigma_j](z)
 =\frac{\sigma_i(q_j)q_i}{\sigma_j(q_i)q_j}z.
\end{equation}
Using \eqref{eq:comm-formula},
\[
 [\widetilde\sigma_a,\widetilde\sigma_b]=\tau,
\]
because $\sigma_a(\sqrt[p]{a})=\zeta\sqrt[p]{a}$ and $\sigma_a$ fixes
$f_1,f_2$. Likewise,
\[
 [\widetilde\sigma_c,\widetilde\sigma_d]=\tau^{-1},
\]
because $\sigma_d(\sqrt[p]{d})=\zeta\sqrt[p]{d}$.
All other commutators among
\[
 \widetilde\sigma_a,\widetilde\sigma_b,
 \widetilde\sigma_c,\widetilde\sigma_d
\]
are trivial.

It remains to check that the two auxiliary lifts generate a commuting
$C_p^2$-factor relative to these four lifts. All commutators are immediate
from the displayed quotients except for
\[
 (\widetilde\sigma_b,\widetilde\sigma_y),\qquad
 (\widetilde\sigma_c,\widetilde\sigma_x),\qquad
 (\widetilde\sigma_x,\widetilde\sigma_y).
\]
For the first pair, $\sigma_b$ and $\sigma_y$ have the same restriction to
$F(\sqrt[p]{by})$, so the two occurrences of the conjugate of $f_2$ in
\eqref{eq:comm-formula} cancel. The second pair is identical. For the last
pair, writing
\[
 q_y=\frac{1}{\sqrt[p]{x}f_2},
 \qquad
 q_x=\frac{1}{\sqrt[p]{y}f_3},
\]
we have
\[
 \sigma_x(q_y)=\zeta^{-1}q_y,
 \qquad
 \sigma_y(q_x)=\zeta^{-1}q_x,
\]
so the factors again cancel.

Thus
\[
 \langle\widetilde\sigma_a,\widetilde\sigma_b,
 \widetilde\sigma_c,\widetilde\sigma_d,\tau\rangle
 \cong\Hp*\Hp,
\]
while $\widetilde\sigma_x$ and $\widetilde\sigma_y$ generate a commuting
$C_p^2$-factor. Since
\[
 [\widetilde L:F]=p\,[M:F]=p^7=|\Hp*\Hp|\,p^2,
\]
these relations account for the full Galois group, and therefore
\[
 \Gal(\widetilde L/F)\cong(\Hp*\Hp)\times C_p^2.
\]
The fixed field of the auxiliary factor contains $K$, because the auxiliary
generators fix $\sqrt[p]{a},\sqrt[p]{b},\sqrt[p]{c},\sqrt[p]{d}$, and its
Galois group over $F$ is $\Hp*\Hp$.
\end{proof}

\begin{remark}
The independence hypothesis in Theorem~\ref{thm:explicit} is used only to
make the two auxiliary Kummer directions honest independent generators of
$\Gal(M/F)$ and thereby produce the transparent direct-product decomposition
above. The existence criterion, Theorem~\ref{thm:global-realizability}, has
no such hypothesis. If $x$ or $y$ is dependent on $a,b,c,d$ modulo
$F^{\times p}$, a radical construction must instead be rewritten relative to
a basis of the actual Kummer space; one cannot simply retain an artificial
$C_p^2$ auxiliary factor.
\end{remark}

\section{Cyclotomic descent}

The preceding explicit formulas were written in Kummer form and therefore
assumed $\mu_p\subset F$. Cyclotomic descent allows the corresponding
existence criterion to be formulated without requiring the ground field to
contain the $p$-th roots of unity. Passing to the cyclotomic extension places
the central embedding problem in Kummer form, where the symbol obstruction
can be computed explicitly, and solvability then descends to the original
field.

Let
\[
 F_p=F(\mu_p),
 \qquad
 d=[F_p:F]\mid p-1,
\]
and put
\[
 G_p=\Gal(F_p/F).
\]
We use the following consequence of the cyclotomic descent theory for
central embedding problems; see \cite[Corollary~8.1.5]{Ledet}.

\begin{theorem}\label{thm:descent}
Let $K/F$ be a finite Galois extension with group $G$ and assume
\[
 K\cap F_p=F.
\]
Let
\[
 1\longrightarrow C_p\longrightarrow E\longrightarrow G\longrightarrow1
\]
be a central non-split extension, and put $K_p=KF_p$. After identifying the
kernel $C_p$ with $\mu_p$ over $F_p$, the embedding problem over $K/F$ is
solvable if and only if the corresponding base-changed embedding problem
over $K_p/F_p$ is solvable.
\end{theorem}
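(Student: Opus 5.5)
The plan is the standard restriction--corestriction argument. Since the extension is central with kernel $C_p$, an extension class $\varepsilon\in H^2(G,C_p)$ is attached to it. The embedding problem over $K/F$ is solvable if and only if the inflation $\inf(\varepsilon)\in H^2(\Gal(F^{s}/F),C_p)$ vanishes. Here we use that $C_p$ has trivial action over $F$, and we fix the identification $C_p\cong\mu_p$ over $F_p$.

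\textbf{Restriction of the class.} Because $K\cap F_p=F$, restriction gives an isomorphism $\Gal(K_p/F_p)\cong G$. Under this isomorphism, the base-changed embedding problem over $K_p/F_p$ has extension class $\varepsilon$ itself. Its obstruction is therefore $\operatorname{res}^{F}_{F_p}\inf(\varepsilon)$, where we first inflate to absolute Galois cohomology and then restrict to $\Gal(F^s/F_p)$.

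\textbf{The easy direction.} If the problem over $F$ is solvable, compose a solution with restriction. Concretely, if $L/F$ solves it, then $L$ and $F_p$ are linearly disjoint over $F$: the intersection $L\cap F_p$ has degree dividing both $|E|$ (a power of $p$ times $|G|$) and $d\mid p-1$. A cleaner way to say this is that solvability simply means the vanishing of $\inf\varepsilon$, and the restriction of zero is zero.

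\textbf{The converse.} Suppose $\operatorname{res}\inf(\varepsilon)=0$. Then
\[
 0=\operatorname{cor}\circ\operatorname{res}\bigl(\inf\varepsilon\bigr)=d\cdot\inf\varepsilon .
\]
The class $\inf\varepsilon$ is killed by $p$, and $d$ divides $p-1$, so $d$ is prime to $p$. Hence $\inf\varepsilon=0$, and the problem over $F$ is solvable.

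It remains to pass from vanishing of $\inf\varepsilon$ to a solution. By hypothesis the extension is non-split, so a weak solution has image all of $E$: a proper subgroup of $E$ surjecting onto $G$ would be a complement to the central $C_p$, making the extension split. (I would phrase ``solvable'' consistently with Ledet's convention; weak and proper solvability coincide here for exactly this reason.)

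\textbf{Main obstacle.} The key step is the bookkeeping of coefficients. Over $F$ the kernel is $C_p$ with trivial action, whereas over $F_p$ it is identified with $\mu_p$. The restriction map must be taken in cohomology with $C_p=\mathbb Z/p$ coefficients, so that the corestriction formula $\operatorname{cor}\circ\operatorname{res}=d$ applies without twisting. Only after restricting to $F_p$ do we use the $\Gal(F^s/F_p)$-isomorphism $\mathbb Z/p\cong\mu_p$, which depends on a choice of $\zeta$, to interpret the obstruction as a Brauer class, as in Theorem~\ref{thm:obstruction}.
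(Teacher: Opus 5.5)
Your restriction--corestriction argument is correct and is the one the paper has in mind when it defers to Ledet's Corollary~8.1.5. The steps are:
\begin{itemize}
\item solvability over $F$ is the vanishing of $\operatorname{inf}\varepsilon$ in $H^2(\Gal(F^s/F),\mathbb{Z}/p)$;
\item the base-changed obstruction is the restriction of $\operatorname{inf}\varepsilon$ to $\Gal(F^s/F_p)$;
\item $\operatorname{cor}\circ\operatorname{res}=d$ with $p\nmid d$ gives the converse;
\item non-splitness makes weak and proper solvability coincide.
\end{itemize}
The only flaw is your discarded aside on linear disjointness of $L$ and $F_p$. There, $|E|$ and $d$ can share factors coming from $|G|$, so that divisibility argument fails. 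The correct reason is that $[L\cap F_p:F]$ divides $[L:K]=p$, because $K\cap F_p=F$. Nothing in your proof depends on this aside.
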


For the $C_p^4$-extensions considered here, the linear-disjointness
hypothesis is automatic. Indeed,
\[
 [K:F]=p^4,
 \qquad
 [F_p:F]\mid p-1,
\]
so $[K:F]$ and $[F_p:F]$ are relatively prime and hence
$K\cap F_p=F$.

\begin{corollary}\label{cor:no-roots-unity}
Let $F$ be a global field, let $p$ be an odd prime with
$p\neq\operatorname{char}F$, and let $K/F$ be a $C_p^4$-extension. Let
$\E=\Hp*\Hp$ and fix an embedding problem
\[
 1\longrightarrow C_p\longrightarrow\E
 \longrightarrow\Gal(K/F)\longrightarrow1
\]
whose quotient generators correspond to the presentation in
Definition~\ref{def:E}. Put $F_p=F(\mu_p)$ and $K_p=KF_p$. Since
$K_p/F_p$ is again a $C_p^4$-extension, choose a Kummer basis adapted to
those four quotient generators,
\[
 K_p=F_p(\sqrt[p]{a},\sqrt[p]{b},\sqrt[p]{c},\sqrt[p]{d}).
\]
Then the embedding problem over $K/F$ is solvable if and only if
\[
 (a,b)_p=(c,d)_p\in\Br(F_p)[p].
\]
In that case there exists a Galois extension $L/F$ containing $K$ with
\[
 \Gal(L/F)\cong\Hp*\Hp.
\]
\end{corollary}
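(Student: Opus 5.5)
The plan is to reduce everything to Theorem~\ref{thm:descent} followed by Corollary~\ref{cor:obstruction-E} over $F_p$. We first check that Theorem~\ref{thm:descent} applies. The extension $1\to C_p\to\E\to C_p^4\to1$ is central, and it is non-split: $\E$ is extraspecial, so its center $\langle\tau\rangle$ equals its commutator subgroup. A splitting would give a complement $Q\cong C_p^4$ with $\E=\langle\tau\rangle\times Q$, which is abelian, a contradiction. Linear disjointness $K\cap F_p=F$ was verified just before the corollary by comparing the degrees $p^4$ and $[F_p:F]\mid p-1$. Hence the embedding problem over $K/F$ is solvable if and only if the base-changed problem over $K_p/F_p$ is solvable.

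Next we identify the base-changed problem. Restriction gives an isomorphism $\Gal(K_p/F_p)\cong\Gal(K/F)$, and the base-changed problem is the pullback
\[
 1\to\mu_p\to\E\to\Gal(K_p/F_p)\to1,
\]
with kernel identified with $\mu_p$ through the chosen $\zeta$. The quotient generators of Definition~\ref{def:E} correspond to four elements of $\Gal(K_p/F_p)$. Since $\mu_p\subset F_p$, Kummer theory identifies $\Gal(K_p/F_p)$ with the dual of the Kummer group of $K_p$. We may therefore choose $a,b,c,d\in F_p^\times$ whose classes form the dual basis: $\sigma_a$ moves $\sqrt[p]{a}$ by $\zeta$ and fixes the other three radicals, and similarly for $b,c,d$. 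These are the radicals named in the statement.

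With this basis, Theorem~\ref{thm:obstruction} and Corollary~\ref{cor:obstruction-E} apply verbatim over $F_p$. The lifts $s_a,\dots,s_d$ have trivial $p$-th powers, and their commutators are exactly those of the presentation. So the obstruction is $(a,b)_p-(c,d)_p\in\Br(F_p)[p]$, and solvability over $F_p$ is equivalent to $(a,b)_p=(c,d)_p$. Combined with the descent step, this gives the claimed criterion.

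For the final sentence, recall that the embedding problem asks for a Galois extension $L/F$ containing $K$ with an isomorphism $\Gal(L/F)\cong\E$ compatible with the quotient maps; this is how solvability is interpreted in Theorem~\ref{thm:global-realizability}. If Ledet's descent theorem only provides weak solutions, that is, Galois algebras, we can upgrade them. Over a global field, a central non-split problem with kernel $C_p$ that has a weak solution also has a proper one. Alternatively, any solution whose image is a proper subgroup of $\E$ surjecting onto $C_p^4$ would yield a complement, since the kernel has prime order, and no complement exists because the extension is non-split. So every solution is automatically surjective, which gives $\Gal(L/F)\cong\Hp*\Hp$.

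The main obstacle I expect is the bookkeeping of the identification $C_p\cong\mu_p$ over $F_p$ and the orientation of the Kummer basis. A different choice of $\zeta$ or of basis scales the obstruction by a unit of $\Fp$ but preserves the relative sign between the two symbols, so it does not affect the vanishing criterion. I would check this explicitly.
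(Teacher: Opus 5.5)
Your proposal is correct and follows the paper's proof. Both arguments use $K\cap F_p=F$ to invoke Theorem~\ref{thm:descent} and then identify the base-changed obstruction over $F_p$ as $(a,b)_p-(c,d)_p$ via Corollary~\ref{cor:obstruction-E}. Your extra checks are sound and fill in what the paper leaves tacit: the extension is non-split because the center of $\E$ equals its commutator subgroup, and every solution is surjective because a proper image surjecting onto $C_p^4$ would be a complement.
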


\begin{proof}
The coprimality argument above gives $K\cap F_p=F$, so
Theorem~\ref{thm:descent} applies. Over $F_p$ the extension $K_p/F_p$ is
Kummer, and Corollary~\ref{cor:obstruction-E} identifies the obstruction to
the base-changed problem as
\[
 (a,b)_p-(c,d)_p.
\]
Thus the base-changed problem is solvable exactly when the displayed symbol
classes agree. Cyclotomic descent transfers solvability back to $F$.
\end{proof}

\begin{remark}
When $\mu_p\not\subset F$, one should not write
\[
 K=F(\sqrt[p]{a},\sqrt[p]{b},\sqrt[p]{c},\sqrt[p]{d})
\]
as though $K/F$ were already a Kummer $C_p^4$-extension. The Kummer
description is made only after extending scalars to $F_p=F(\mu_p)$, as in
Corollary~\ref{cor:no-roots-unity}.
\end{remark}






\end{document}